\numberwithin{equation}{section}
\newtheorem{thm}{Theorem}[section]
\newtheorem{cor}[thm]{Corollary}
\newtheorem*{Thm}{Main Theorem}
\newtheorem*{Cor}{Corollary}
\theoremstyle{definition}
\newtheorem{defn}[thm]{Definition}
\newcommand{\trop}{\mathrm{trop}}
\newcommand{\rank}{{\rm rank}}
\newcommand{\val}{{\rm val}}
\newcommand{\Lin}{{\rm Lin}}
\newcommand{\Spec}{{\rm Spec}}
\newcommand{\RR}{{\mathbb R}}
\newcommand{\ZZ}{{\mathbb Z}}
\newcommand{\PP}{{\mathbb P}}
\newcommand{\GG}{{\mathbb G}}
\newcommand{\CM}{{\mathcal M}}
\newcommand{\CF}{{\mathcal F}}
\newcommand{\Star}{{\rm Star}}
\newcommand{\ord}{{\rm ord}}
\def\:{\colon}
\def\val{\nu}
\newcommand{\cC}{{\mathcal{C}}}
\newcommand{\red}{\mathrm{red}}
\theoremstyle{remark}
\newtheorem{rem}[thm]{Remark}
\DeclareRobustCommand{\cev}[1]{%
  {\mathpalette\do@cev{#1}}%
}
\newcommand{\do@cev}[2]{%
  \vbox{\offinterlineskip
    \sbox\z@{$\m@th#1 x$}%
    \ialign{##\cr
      \hidewidth\reflectbox{$\m@th#1\vec{}\mkern4mu$}\hidewidth\cr
      \noalign{\kern-\ht\z@}
      $\m@th#1#2$\cr
    }%
  }%
}
\title{Balancing properties of tropical moduli maps}
\author{Karl Christ, Xiang He, and Ilya Tyomkin}
\thanks{
KC was partially supported by NSF FRG grant DMS–2053261, a Minerva Short-Term Research Grant, and the Center for Advanced Studies at BGU. XH is supported by the National Key R$\&$D Program of China 2022YFA1007100, the NSFC grant 12301057, and the ERC Consolidator Grant 770922 - BirNonArchGeom.\\
2020 Mathematics Subject Classification: Primary – 14T20; Secondary – 14H10
}
\address[Christ]{Dipartimento di Matematica\\
	Università di Torino\\Via Carlo Alberto 10 \\10123 Turin\\  Italy }\email{karl.christ@unito.it}
\address[He]{Yau Mathematical Sciences Center\\ Shuangqing Complex Building, Tsinghua University\\ Haidian District, Beijing\\ 100084\\ China}\email{xianghe@mail.tsinghua.edu.cn}
\address[Tyomkin]{Department of Mathematics\\
	Ben-Gurion University of the Negev\\P.O.Box 653 \\Be'er Sheva\\ 84105\\  Israel}\email{tyomkin@math.bgu.ac.il}
\begin{document}

\begin{abstract}
Given a family of parameterized algebraic curves over a strictly semistable pair, we show that the simultaneous tropicalization of the curves in the family forms a family of parameterized tropical curves over the skeleton of the strictly semistable pair. We show that the induced tropical moduli map satisfies a certain balancing condition, which allows us to describe properties of its image and deduce a new liftability criterion.
\end{abstract}
	
\maketitle
	
\setcounter{tocdepth}{1}

\section{Introduction}

Starting with the seminal work of Mikhalkin, \cite{Mik05}, tropical geometry emerged as a very useful tool in many aspects of algebraic geometry, including enumerative problems, questions about the geometry of various moduli spaces, etc. Tropical varieties are polyhedral objects controlling degenerations of algebraic varieties, and tropical methods in algebraic geometry can be considered as a refined version of degeneration and deformation-theoretic techniques. 

Most applications of tropical geometry build upon so-called tropicalization and lifting results. Roughly speaking, tropicalization is a kind of linearization procedure associating a tropical object to an algebraic one, and lifting results aim at realizing a given tropical object as the tropicalization of an algebro-geometric one. There exists no functorial way to tropicalize algebraic geometry. Yet, various useful tropicalization constructions have been developed, see, e.g., \cite{Mik05, Tyo12, ACP15, ABBR15, Tyo17, ACGS, CCUW, Ran22, MUW21, CGM, CHT23}. In particular, in \cite{CHT23}, we introduced tropicalizations of one-parameter families of curves in toric varieties and studied their properties. As an application, in {\em loc. cit.} we resolved the Severi problem in arbitrary characteristic.

In general, lifting results are more challenging to achieve as they are based on deformation theory, which often is obstructed and difficult to control. Surprisingly, the basic balancing properties of tropicalizations of families established in \cite{CHT23} allowed us to prove non-trivial lifting results without any deformation theory. The goal of the current paper is to extend the theory initiated in {\em loc. cit.} to families of curves over higher-dimensional bases. In \cite{CHT24b, CHT25}, we use the results of the current paper to prove the irreducibility of Hurwitz spaces in arbitrary characteristic, and to count moduli of curves of given genus that can be realized as a curve in a given linear system on a toric surface. We believe that the results of the current paper will find further applications in the future.

Let us describe the main results of the paper in more detail. Suppose we are given a family of smooth curves $\cC \to U$ with marked points together with a map $f\:\cC \to X$ to a toric variety $X$. After replacing $U$ with an appropriate alteration, we construct a family of stable curves whose base $B$ admits a natural tropicalization $\Lambda$ in the sense of Gubler, Rabinoff, and Werner \cite{GRW16}. We then construct a family of parameterized tropical curves $\left(\Gamma_\Lambda \to \Lambda, h\:\Gamma_\Lambda \to \RR^n\right)$ such that for any $b\in B$ the tropicalization of $f_b\:\cC_b\to X$ is the fiber $h_q\:\Gamma_q\to\RR^n$, where $q=\trop(b)\in\Lambda$. 
The family of parameterized tropical curves $\left(\Gamma_\Lambda \to \Lambda, h\:\Gamma_\Lambda \to \RR^n\right)$ induces a natural map to the moduli space of stable parameterized tropical curves of appropriate degree $\nabla$, genus $g$, and number $n$ of contracted legs \[\alpha \colon \Lambda \to M_{g,n,\nabla}^\trop,\] 
given by $q\mapsto (\Gamma_q,h_q)$; see \S~\ref{subsec:modulioftrcr} for the description  of the moduli space $M_{g,n,\nabla}^\trop$ and \S~\ref{subsubsec:family of parameterized tropical curves} for the definition of families of parameterized tropical curves and the construction of the induced map $\alpha$. 

It is natural to expect this map to be itself a tropical map, i.e., a piecewise integral affine map satisfying certain balancing properties. Our main result provides first evidence for this expectation. Namely, we show that the map $\alpha$ is a piecewise integral affine map, and that it satisfies natural balancing properties along two types of strata of small codimension: the maximal strata classifying weightless and $3$-valent parameterized tropical curves, and their codimension-one boundary strata classifying weightless parameterized tropical curves which are {\em almost $3$-valent}, i.e., $3$-valent except for a unique $4$-valent vertex. Here is our main result (see Theorem~\ref{thm:main thm} for the precise formulation).

\begin{Thm}
    Let $(B^0,H^0)$ be a strictly semistable pair over the ring of integers of a non-Archi\-medean field $K$, and $(\cC^0\rightarrow B^0,\sigma^0_\bullet)$ a split family of stable marked curves over $B^0$. Let $B$, $H$ and $\cC$ be the generic fibers of $B^0$, $H^0$, and $\cC^0$, respectively. Suppose $\cC$ is smooth over $B':=B\backslash H$. Let $X$ be a toric variety and $f \colon \cC|_{B'} \rightarrow X$ a morphism, such that for any $b\in B'$, the restriction $f|_b\:\cC_b\to X$ is torically transverse, i.e.,  $f|_b^{-1}$ preserves the codimension of the torus orbits. Then there exists a family of parameterized tropical curves $h\: \Gamma_\Lambda\to N_\RR$ over the Gubler-Rabinoff-Werner tropicalization $\Lambda$ of the pair $(B^0,H^0)$ such that 
    \begin{enumerate}
        \item For any $\eta \in B'(K)$, the fiber of $(\Gamma_\Lambda, h)$ over $\trop(\eta)$ is the tropicalization of $f\:\cC_\eta\to X$;
        \item For any face $W$ of $\Lambda$ of codimension one (resp. of codimension at least two) such that the tropical curves over the interior of $W$ are weightless and $3$-valent except for at most one $4$-valent vertex, the induced map $\alpha\:\Lambda\to M^{\trop}_{g,n,\nabla}$ is either harmonic (resp. quasi-harmonic) or locally combinatorially surjective along $W$.
    \end{enumerate}
\end{Thm}

Here, {\em quasi-harmonic} means that a positive linear combination of the images of the primitive normal vectors to a stratum is contained in the image of the stratum, and {\em harmonic} requires that the coefficients in the linear combination can be chosen to be $1$; see Definition~\ref{defn:balancing}. {\em Locally combinatorially surjective}, on the other hand, means that the image of $\alpha$ contains points in each of the weightless and $3$-valent strata adjacent to the weightless and almost $3$-valent stratum; see Definition~\ref{def:harloccombsurj}. 

As a consequence of the Main Theorem we establish the following corollary (see Corollary~\ref{cor:main}) that provides a new relative realizability criterion for (parameterized) tropical curves.

\begin{Cor}
    Let $X$ be a toric variety and $f\colon \cC \rightarrow X$ a family of parameterized curves defined over the field $K$. Assume that the base $B$ of the family is quasi-projective, and consider the set of tropicalizations
    $$\Sigma:=\left\{\left[ \trop(f_\eta, C_\eta)\right]\; :\; \eta\in B(K) \right\}\subseteq M^{\trop}_{g,n,\nabla}.$$
    Let $M_{[\Theta]}\subset M^{\trop}_{g,n,\nabla}$ be a stratum. Then,
    \begin{enumerate}
        \item The closure $\overline\Sigma\subseteq M^{\trop}_{g,n,\nabla}$ is the image of a rational polyhedral complex $\Lambda$ under a piecewise integral affine map;
        \item If $M_{[\Theta]}$ is weightless and $3$-valent and $\dim(\overline{\Sigma}\cap M_{[\Theta]})=\dim M_{[\Theta]}$, then $\overline M_{[\Theta]}\subseteq \overline{\Sigma}$;
        \item If $M_{[\Theta]}$ is weightless and almost $3$-valent and $\overline{\Sigma}$ contains one of its adjoint weightless and $3$-valent strata, then $\overline{\Sigma}$ contains all other such strata as well. 
    \end{enumerate}
\end{Cor}

In \cite{CHT24b} we apply this corollary to prove the irreducibility of Severi varieties parametrizing integral curves of a given geometric genus on certain polarized toric surfaces. We then deduce the irreducibility of the classical Hurwitz spaces in any characteristic. More precisely, in \emph{loc. cit.}, we apply the corollary to the universal family over an irreducible component $B$ of the Severi variety. By imposing $\dim(B)$ point constraints, whose tropicalizations are in tropically general position, one can easily show that there exists a weightless and $3$-valent combinatorial type $\Theta$ such that $\dim(\overline{\Sigma}\cap M_{[\Theta]})=\dim M_{[\Theta]}$. On the other hand, one can show that there exists a weightless and $3$-valent type $\Theta_0$ such that $\overline M_{[\Theta_0]}\subseteq \overline{\Sigma}_0$, where $\Sigma_0$ corresponds to a unique irreducible component $B_0$ of the Severi variety. Thus, Assertion (2) of the corollary, reduces the irreducibility problem to a purely combinatorial question reminiscent of establishing connectivity of the moduli space $M^{\trop}_{g,n,\nabla}$ in codimension one. Namely, is there a sequence of (almost) $3$-valent strata adjacent to each other starting at $M_{[\Theta]}$ and ending at $M_{[\Theta_0]}$?  

Another application of the results of the current paper will be a computation of the number of moduli of curves in (irreducible) Severi varieties. More precisely, normalizing the universal family over a Severi variety $V$ induces a map from $V$ to the moduli space $\CM_g$ of smooth genus $g$ curves. Similarly, on the tropical side, we obtain a natural map from the moduli space $M_{g,\nabla}^\trop$ to the moduli space $M_g^{\trop}$ of stable tropical curves of genus $g$. Our main result, together with the results of \cite{ACP15} for $M_g^{\trop}$, allows to relate the tropical and the algebro-geometric side of this picture. In a forthcoming paper \cite{CHT25} we use it to establish the dimension of the image of Severi varieties in the moduli space $\CM_g$.

\medskip

\textbf{Acknowledgements:} We thank an anonymous referee for many thoughtful comments.

\section{Notation and terminology}
Throughout this paper, we work over a valued field $(K, \nu)$, which is the algebraic closure of a complete discretely valued field $F$ with algebraically closed residue field. The ring of integers of $K$ is denoted by $K^0$, the maximal ideal by $K^{00}$, and the residue field by $\widetilde K$; similarly for $F$.

Let $N\simeq \ZZ^n$ be a lattice, and $N_\RR:=N\otimes_\ZZ\RR$ the corresponding vector space. By a {\em polyhedron} in $N_\RR$ we mean the intersection of finitely many affine half-spaces. A polyhedron is said to have {\em rational slopes} if the presentation above can be chosen such that all half-spaces are given by integral normal vectors. For a subset $P\subset N_\RR$, we denote by $\Lin(P)$ the linear subspace acting simply transitively on the affine hull of $P$. If $P$ is a polyhedron with rational slopes, then $\Lin(P)$ admits a natural integral structure induced from $N_\RR$, i.e., the lattice $N\cap \Lin(P)$ of integral vectors. By an {\em integral structure} on $P$, we mean the integral structure on $\Lin(P)$.

\subsection{Polyhedral complexes}

\begin{defn}\label{defn:abstract poly complex}
    An \textit{(abstract) polyhedral complex with integral structure} is a connected topological space $\Lambda$ together with a finite set $\mathcal F(\Lambda)$ of closed subsets, called {\em faces of $\Lambda$}, and a map $\mu_W\colon W\rightarrow N_W\otimes\RR$ for every $W\in\mathcal F(\Lambda)$ such that 
    \begin{enumerate}
        \item $N_W$ is a lattice of finite rank for any face $W\in\mathcal F(\Lambda)$;
        \item The map $\mu_W$ is a homeomorphism from $W$ to a polyhedron with rational slopes of dimension $\rank(N_W)$. By abuse of language, we will address $W$ itself as a polyhedron and speak about its faces, interior, dimension, integral structure, etc.;
        \item $\Lambda=\bigsqcup_{W\in\mathcal F(\Lambda)}W^\circ$, where $W^\circ$ denotes the interior of $W$;
        \item For any $W,W'\in\mathcal F(\Lambda)$, the intersection $W\cap W'$ is a union of faces of $W$ and of $W'$. And each face in the intersection belongs to $\mathcal F(\Lambda)$;        
        \item If $W\in\mathcal F(\Lambda)$ and $W'\in\mathcal F(\Lambda)$ is a face of $W$, then the integral structure on $W'$ is the restriction of the integral structure on $W$, i.e., the linear part of $\mu_W\circ \mu_{W'}^{-1}$ takes $N_{W'}$ to a saturated sublattice in $N_W$.
    \end{enumerate}
    We say that $\Lambda$ is {\em rational} if each $\mu_W(W)$ has rational vertices in $N_W\otimes\RR$. All polyhedral complexes considered in this paper will be assumed to have integral structures, and we will call them polyhedral complexes for short.
\end{defn}
\begin{rem}
    Axiom (2) implies that each $W\in\mathcal F(\Lambda)$ has a structure of a polyhedron with rational slopes, axioms (3) and (4) mean that $\Lambda$ is ``glued'' from polyhedra along their faces, and axiom (5) ensures that the integral structures on the polyhedra are compatible with the gluing.
\end{rem}

\begin{defn}
    A map $\beta\colon \Lambda\rightarrow \Lambda'$ is called \textit{piecewise integral affine} if for each face $W\in\mathcal F(\Lambda)$ there is a face $W'\in\mathcal F(\Lambda')$ such that $\beta(W)\subseteq W'$ and $\beta|_W\colon W\to W'$ is integral affine.
\end{defn}

For a face $W\in\mathcal F(\Lambda)$, let $\{W_i\}_i$ be the set of faces in $\mathcal F(\Lambda)$ containing $W$ as a codimension-one face. Then, for each $i$, the lattice $N_W$ is naturally identified with a saturated sublattice of $N_{W_i}$ of corank one. Let $\vec e_i+N_W$ be the generator of $N_{W_i}/N_W$ for which $\mu_{W_i}(W_i)\subseteq \mu_{W_i}(W)+\mathbb R_{\geq 0}\vec e_i$, and denote by $\Star(W)$ the collection $\{\vec e_i+N_W\}$. We will often consider a small open neighborhood of $W^\circ$ in $W^\circ\cup\left(\bigcup_iW_i^\circ\right)$. To distinguish this set from $\Star(W)$, we denote it by $\Star(W^\circ)$.

\begin{defn}\label{defn:balancing}
    Let $\Lambda$ be a polyhedral complex and $W\in \mathcal F(\Lambda)$ a face. A piecewise integral affine map $\beta\colon\Lambda\rightarrow \RR^n$ is called \textit{quasi-harmonic} at $W$ if 
    $$ \sum_{\vec e+N_W\in\Star(W)}a_{\vec e}\cdot \frac{\partial \beta}{\partial \vec e}\in\Lin(\beta(W))$$
    for some positive integers $a_{\vec e}$. It is called \textit{harmonic} at $W$ if one can choose all $a_{\vec e}=1$. Finally, the map $\beta$ is called {\em (quasi-)harmonic} if it is (quasi-)harmonic at all codimension-one faces. 
\end{defn}

\begin{rem}
    Plainly, being (quasi-)harmonic at $W$ depends only on the restriction of the map $\beta$ to $\Star(W^\circ)$.
\end{rem}

\subsection{Tropical curves} We next recall some conventions concerning tropical curves and their moduli from \cite{CHT23,CHT22}, and extend the notion of one-parameter families of parameterized tropical curves therein to arbitrary dimension of the base. 
\subsubsection{Abstract and parameterized tropical curves}
A tropical curve $\Gamma$ consists of an underlying finite graph $\GG$, a {\em length function} $\ell\colon E(\mathbb G)\rightarrow \mathbb R_{>0}$ and a {\em weight (or genus) function} $g\colon V(\mathbb G)\rightarrow \mathbb Z_{\geq 0}$.
Here $V(\GG)$ denotes the set of vertices of $\GG$, and  $E(\GG)$ the set of edges.
The graph $\GG$ is furthermore equipped with an ordered set of half-edges, which we call {\em legs}. The set of legs is denoted by $L(\GG)$, and we define $\overline E(\mathbb G):=E(\GG)\cup L(\GG)$. We extend $\ell$ to the legs $l \in L(\GG)$ by setting $\ell(l):=\infty$.

We identify tropical curves with their geometric realization as polyhedral complexes, without necessarily distinguishing the two viewpoints. That is, we may divide the loops at the middle point, and then identify the edges of $\GG$ with bounded closed intervals of the corresponding lengths, and the legs of $\GG$ with semi-bounded closed intervals.

For $e\in\overline E(\GG)$, we write $\vec e$ to indicate a choice of orientation on $e$. If $e$ is a leg, then it will {\em always} be oriented away from the vertex, while bounded edges will be considered with both possible orientations. 
In particular, the elements in $\Star(v)$ are in one-to-one correspondence with the collection of oriented edges and legs having $v$ as their tail.
The number of elements in $\Star(v)$ is called the {\em valence} of $v$. The {\em genus} of $\Gamma$ is defined to be $g(\Gamma) = g(\GG) :=1-\chi(\mathbb G)+\sum_{v\in V(\mathbb G)}g(v)$, where $\chi(\GG):=b_0(\GG)-b_1(\GG)$ is the Euler characteristic of $\GG$. 

A tropical curve $\Gamma$ is called {\em weightless} if the weight function is identically zero and {\em stable} if $|\Star(v)|+ 2 g(v)\ge 3$ for every vertex $v\in V(\GG)$.

To define {\em parameterized tropical curves}, we fix a lattice $N$ and set $N_\RR:=N\otimes\RR$. 
Then a parameterized tropical curve is, by definition, a harmonic piecewise integral affine map $h\colon \Gamma\rightarrow N_\mathbb R$ from a tropical curve $\Gamma$ (considered as a polyhedral complex) to $N_\mathbb R$. 
The {\em combinatorial type} $\Theta$ of a parameterized tropical curve is defined to be the underlying weighted graph $\mathbb G$ together with the collection of slopes $\frac{\partial h}{\partial \vec e}$ for $e \in \overline E(\GG)$.
The {\em extended degree} $\overline\nabla$ of a parameterized tropical curve consists of the sequence of slopes $\left(\frac{\partial h}{\partial \vec l}\right)_{l\in L(\GG)}$, and the {\em degree} $\nabla$ of the subsequence $\left(\frac{\partial h}{\partial \vec l_i}\right)$ of non-zero slopes.

\subsubsection{Moduli of parameterized tropical curves}\label{subsec:modulioftrcr}

Fixing the genus $g$, degree $\nabla$, and number $n$ of contracted legs, we denote by $M_{g,n, \nabla}^{\trop}$ the moduli space of parameterized stable tropical curves with these invariants. We will always assume that the first $n$ legs $l_1,\dotsc, l_n$ are the contracted ones.
Then the space $M_{g,n, \nabla}^{\trop}$ is a generalized polyhedral cone complex with integral structure in the sense of \cite{ACP15}, whose strata $M_{[\Theta]}$ are indexed by combinatorial types $\Theta$ with the fixed invariants. If $\GG$ is the underlying graph of the combinatorial type $\Theta$, then $M_{[\Theta]} = M_{\Theta}/\mathrm{Aut}(\Theta)$, where $\mathrm{Aut}(\Theta)$ denotes the automorphisms of the combinatorial type. Here $M_{\Theta}$ is the interior of a polyhedron  $\overline M_\Theta$ in $\RR^{|E(\GG)|} \times N_\RR^{|V(\GG)|}$ and it parameterizes tropical curves $h\: \Gamma \to N_{\RR}$ of type $\Theta$: the $e$-coordinate for an edge $e \in E(\GG)$ encodes the length $\ell(e)$ of $e$; the $v$-coordinate for a vertex $v \in V(\GG)$ encodes the coordinate of $h(v) \in N_\RR$.

We call the stratum $M_\Theta$ {\em weightless and $3$-valent} if so is the combinatorial type $\Theta$, and we call it {\em weightless and almost $3$-valent} if $\Theta$ is weightless and $3$-valent except for a unique $4$-valent vertex. If $M_\Theta$ is weightless and almost $3$-valent it is contained in the closure of at most $3$ strata, all of which are weightless and $3$-valent. Furthermore, it has codimension one in each of them. 

\begin{rem}
    The nice strata (resp. simple walls) that are used in \cite{CHT23} are particular cases of weightless and $3$-valent (resp. almost $3$-valent) strata, where the strata are in addition required to be regular, i.e., of the expected dimension. 
\end{rem}

\subsubsection{Families of parameterized tropical curves over polyhedral complexes} \label{subsubsec:family of parameterized tropical curves}
In this subsection, we generalize the notion of one-parameter families of parameterized tropical curves in \cite[\S 3.1.3]{CHT23} to arbitrary dimension. 
Let $\Lambda$ be a polyhedral complex. Consider a datum (\dag) consisting of the following:

\begin{itemize}
\item an extended degree $\overline\nabla$;
\item a combinatorial type $\Theta_W=\left(\GG_W,\left(\frac{\partial h}{\partial\vec\gamma}\right)\right)$ of extended degree $\overline\nabla$ for each $W \in \mathcal F(\Lambda)$;
\item an integral affine function $\ell_W(\gamma,\cdot)\:W\to \RR_{\ge 0}$ for each $W \in \mathcal F(\Lambda)$ and $\gamma\in E(\GG_W)$;
\item an integral affine function $h_W (u,\cdot):W\to N_\RR$ for each $W \in \mathcal F(\Lambda)$ and $u\in V(\GG_W)$;
\item a weighted contraction $\phi_{W',W}\: \mathbb{G}_{W'} \to \mathbb{G}_W$ preserving the order of the legs for each pair of faces $W,W'$ such that $W$ is a face of $W'$.
\end{itemize}
For any $W \in \mathcal F(\Lambda)$ and any $q\in W^\circ$, set $\Gamma_q:=(\mathbb{G}_W, \ell_q)$, where $\ell_q\:E(\GG_W)\to \RR_{\ge 0}$ is the function defined by $\ell_q(\gamma):=\ell_W(\gamma,q)$. Denote by $h_q\colon \Gamma_q\rightarrow N_\mathbb R$ the unique piecewise affine map for which $\left(\frac{\partial h}{\partial \vec l_i}\right)=\overline\nabla$$h_q(u)=h_W(u,q)$ for all $u\in V(\GG_W)$ and $\left(\frac{\partial h}{\partial \vec l_i}\right)=\overline\nabla$.

\begin{defn}\label{defn:family of tropical curves}
    Let $\Lambda$ be a polyhedral complex. We say that a datum (\dag) is a {\em family of parameterized tropical curves over $\Lambda$} if the following compatibilities hold for all pairs $W,W' \in\Lambda$ such that $W$ is a face of $W'$, and all $q\in W^\circ$:
\begin{enumerate}
\item $(\Gamma_q,h_q)$ is a parameterized tropical curve of combinatorial type $\Theta_W$;
\item  $\ell_W(\phi_{W',W}(\gamma),q)=\ell_{W'}(\gamma,q)$ for all $\gamma\in E(\GG_{W'})$;
\item $h_W(\phi_{W',W}(u),q)=h_{W'}(u,q)$ for all $u\in V(\GG_{W'})$.
\end{enumerate}
A family of parameterized tropical curves over $\Lambda$ will be denoted by $h\:\Gamma_\Lambda\to N_\RR$ or $(\Gamma_\Lambda,h)$. The tropical curve $(\Gamma_q,h_q)$ will be referred to as the \textit{fiber} of $(\Gamma_\Lambda,h)$ over $q\in \Lambda$.
\end{defn}

\begin{rem}\label{rem:tropfamother}
    The version of tropical families and tropicalization we use in the current paper is natural when working with families of parametrized curves over non-Archimedean fields as the tropicalization of the base embeds naturally as a skeleton in the Berkovich analytification of the base, and similarly for the fibers. There exists another version of tropicalization arising naturally when working in the logarithmic category, see, e.g., \cite{ACGS, CCUW, R17, Ran22}. The logarithmic tropicalization is more general, but the base and the fibers belong to the category of (generalized) polyhedral \emph{cone} complexes. The relation between the two is given by considering the cones over the faces of $\Lambda$ and the cones over the tropical curves in the family. Notice also that the parametrization maps we consider in this paper are particular cases of (stable) maps in {\em loc. cit.}
\end{rem}

\begin{defn}
Let $\Lambda$ be a polyhedral complex, and $\alpha\:\Lambda\to M_{g, n, \nabla}^\trop$ a continuous map. We say that $\alpha$ is {\em piecewise integral affine} if for any $W\in\Lambda$ the restriction $\alpha|_W$ lifts to an integral affine map $W\to \overline M_\Theta$ for some combinatorial type $\Theta$.
\end{defn}
Let $\nabla$ be the degree associated to the extended degree $\overline\nabla$ by removing the zero slopes. Any family of parameterized tropical curves $h\:\Gamma_\Lambda \to N_\RR$ induces a piecewise integral affine map $\alpha\: \Lambda \to M_{g, n, \nabla}^{\trop}$ by sending $q \in \Lambda$ to the point parameterizing the isomorphism class of the stabilization of the fiber $h_q\: \Gamma_q \to N_\RR$. Furthermore, $\alpha$ lifts to an integral affine map from the interior of each face of $\Lambda$ to the corresponding stratum $M_\Theta$. The following definition is a generalization of \cite[Definition 3.4]{CHT23}.

\begin{defn}\label{def:harloccombsurj}
Let $\Lambda$ be a polyhedral complex, $\alpha\:\Lambda\to M_{g, n, \nabla}^\trop$ a piecewise integral affine map, $W$ a face of $\Lambda$, and $\Theta$ a combinatorial type such that $\alpha(W^\circ)\subset M_{[\Theta]}$.
\begin{enumerate}
 \item Suppose $\alpha(\Star(W^\circ))\subset M_{[\Theta]}$. We say that $\alpha$ is {\em (quasi-)harmonic} at $W$ if $\alpha_{|_{\Star(W^\circ)}}$ lifts to a (quasi-)harmonic map $\Star(W^\circ)\to M_\Theta$.
 \item Suppose $\alpha(\Star(W^\circ))\nsubseteq  M_{[\Theta]}$. We say that $\alpha$ is \textit{locally combinatorially surjective} at $W$ if for any $\overline M_{[\Theta]}\subseteq \overline M_{[\Theta']}$, we have $\alpha(\Star(W^\circ))\cap M_{[\Theta']}\neq \emptyset $.
\end{enumerate}
\end{defn}

\subsection{Families of curves}

\label{subsec: Families of curves}

A \emph{family of curves} is a flat, projective morphism of finite presentation and relative dimension one; cf. \cite[\S~2.1]{CHT22}. In our setting it often comes with \emph{marked points}, that is, an ordered collection of disjoint sections contained in the smooth locus of the family. A family of curves with marked points is \emph{prestable} if its geometric fibers have at-worst-nodal singularities; and 
\emph{(semi-)stable} if so are its geometric fibers. A prestable curve with marked points defined over a field is \emph{split} if the irreducible components of its normalization are geometrically irreducible and smooth, and the preimages of the nodes in the normalization are defined over the ground field. A family of prestable curves with marked points is called split if all of its fibers are so; cf. \cite[\S~2.22]{dJ96}.  
Let $U \subset Z$ be an open subset, and $(\cC, \sigma_\bullet)$ a family of curves with marked points over $U$. Then a \emph{model} of $(\cC, \sigma_\bullet)$ over $Z$ is a family of curves with marked points over $Z$, whose restriction to $U$ is $(\cC, \sigma_\bullet)$.

\subsection{Toric varieties and parameterized curves}\label{sec:families of parcur}
Throughout the paper, $M$ and $N$ are the lattice of characters, respectively cocharacters, of a toric variety $X$. The monomial functions are denoted by $x^m$ for $m\in M$. 
A \emph{para\-meterized curve} in the toric variety $X$ is a smooth projective curve with marked points $(C,\sigma_\bullet)$ together with a map $f\: C \to X$ such that $f(C)$ does not intersect torus orbits of codimension greater than one, and the image of $C \setminus \left( \bigcup_i \sigma_i \right)$ under $f$ is contained in the dense orbit of $X$.

A \emph{family of parameterized curves} $f\colon \cC \rightarrow X$ consists of the following data: 
	\begin{enumerate}
	    \item a family of smooth marked curves $(\cC \to B, \sigma_{\bullet})$ over a base $B$, and
	    \item a map $f\colon \cC \rightarrow X$, such that for any geometric point $p \in B$ the restriction $\cC_p \to X$ is a parameterized curve in the above sense. 
	\end{enumerate}

\subsection{Tropicalization of parameterized curves} The canonical tropicalization of parameterized curves is well established; see e.g. \cite{Tyo12,CHT23}. We recall the  construction here for convenience.  
Let $f\: C \to X$ be a parameterized curve, $C^0 \to \Spec(K^0)$ a prestable model, and $\widetilde C$ the fiber of $C^0$ over the closed point of $\Spec(K^0)$.
The \emph{tropicalization} $\trop(C)$ of $C$ with respect to the model $C^0$ is a tropical curve and we need to specify its underlying weighted graph $\GG$, as well as a length function $\ell$. The former is the dual graph of the central fiber $\widetilde C$, i.e., the vertices of $\mathbb G$ correspond to irreducible components of $\widetilde C$, the edges to nodes, and the legs to marked points. Together with the natural incidence relations this gives a finite graph $\mathbb G$, and the order on the set of marked points gives the order on its set of legs. The weight of a vertex $v$ of $\mathbb G$ is defined to be the geometric genus of the corresponding component $\widetilde C_v$ of the reduction $\widetilde C$. Finally, in order to define the length function on an edge $e\in E(\GG)$, let $z\in \widetilde C$ be the node corresponding to $e$. Then \'etale locally at $z$, the total space of $C^0$ is given by $xy=\lambda$ with $\lambda\in K^{00}$. We set $\ell(e) = \nu(\lambda)$. Although $\lambda$ depends on the \'etale neighborhood, its valuation does not, and hence the length function is well-defined.

Next, we explain how to construct the parameterization $h\:\trop(C)\to N_\RR$. To this end, observe that for any $m \in M$ the pullback $f^*(x^m)$ of the monomial $x^m$ gives a non-zero rational function on $C^0$ because the preimage of the dense orbit of $X$ is dense in $C$. Thus, for any irreducible component $\widetilde C_v$ of $\widetilde C$, there is a $\lambda_m \in K^\times$ such that $\lambda_m f^*(x^m)$ is an invertible function at the generic point of $\widetilde C_v$. Here $\lambda_m$ is unique up to an element of valuation $0$. The function $h(v)$, which associates to $m \in M$ the valuation $\val(\lambda_m)$, is clearly linear, and hence $h(v)\in N_\RR$. The parameterization $h\:\trop(C)\to N_\RR$ then is the unique piecewise integral affine function with values $h(v)$ at the vertices of $\trop(C)$, whose slopes along the legs are given as follows: for any leg $l$ and $m\in M$ we set $\frac{\partial h}{\partial \vec l}(m)=-\mathrm{ord}_{\sigma_i} f^*(x^m)$, where $\sigma_i$ is the marked point corresponding to $l$. By \cite[Lemma 2.23]{Tyo12}, $h\:\trop(C)\to N_\RR$ then is a parameterized tropical curve, that is, defines a harmonic map. It is called the \emph{tropicalization} of $f\: C \to X$ with respect to the model $C^0$. 

Clearly, the tropical curve $\trop(C)$ is independent of the parameterization and depends only on $C^0$. If the family $C\to X$ is stable and $C^0$ is the stable model, then the corresponding tropicalization is called simply {\em the tropicalization} of $C$ (resp. of $f\: C\to X$).

\begin{rem}\label{rem:slopesoflegs}
Let $l$ be a leg corresponding to a marked point $\sigma_i$. If $f$ maps $\sigma_i$ to the boundary divisor $D$ of $X$, then, by definition of parameterized curves, $f(\sigma_i)$ belongs to a unique component of $D$, and in particular, to the smooth locus of $D$. It follows that the direction of the slope $\frac{\partial h}{\partial \vec l}$ is determined by the irreducible component of $D$ that contains the image of $\sigma_i$. Furthermore, the integral length of $\frac{\partial h}{\partial \vec l}$ equals the multiplicity of $\sigma_i$ in $f^*D$.
Note also that $\frac{\partial h}{\partial \vec l} = 0$ if and only if $f(\sigma_i)$ is contained in the dense orbit of $X$.
\end{rem}

\subsection{Strictly semistable pairs and their tropicalization}
To tropicalize families, we will first tropicalize the base of the family. To do so, we shall use higher-dimensional tropicalizations. As mentioned in Remark~\ref{rem:tropfamother}, different versions of such tropicalizations have appeared in the literature; see, for example, \cite{MN15, ACGS, CCUW, Ran22, CGM}. Here we will follow the construction by Gubler, Rabinoff, and Werner \cite{GRW16}. Let us recall their construction.
\begin{defn}\label{defn:strictly semistable pairs}(\cite[Definition 3.1]{GRW16})
A \textit{strictly semistable pair} $(B^0,H^0)$ over $K^0$ consists of an irreducible proper flat scheme $B^0$ over $K^0$ and a sum $H^0=\sum H_k$ of distinct effective Cartier divisors $H_k$ on $B^0$ such that $B^0$ is covered by open subsets $U$ which admit an \'etale morphism 
\begin{equation}\label{eq:ssp}
\varphi\colon U\rightarrow \mathrm{Spec}(K^0[x_0,\dotsc,x_d]/\langle x_0\cdot\ldots\cdot x_a-\lambda\rangle)
\end{equation}
for some $a\leq d$ and $\lambda\in K^{00}$. Moreover, we assume that each $H_k$ has irreducible support and $H_k\cap U$ is either empty or defined by $\varphi^*(x_j)$ for some $a+1\leq j\leq d$. 
Similarly, define a \textit{strictly semistable pair} over $F^0$ by replacing $K^0$ with $F^0$. 
\end{defn}


For a strictly semistable pair $(B^0,H^0)$, denote by $\widetilde B$ the special fiber of $B^0$ with irreducible components $\widetilde B_j$. Set 
\begin{equation}\label{eq:divisor}
    D\coloneqq\widetilde B\cup\left(\bigcup_k H_k\right) \subset B^0 
\end{equation}
with irreducible components $\{D_i\}_{i \in I} = \{H_k\}_k \cup \{\widetilde B_j\}_j$. Then the $\widetilde B_j$'s are called the \emph{vertical components} of $D$ and the $H_k$'s the \emph{horizontal components}. The divisor $D$ admits a stratification by the irreducible components of $\bigcap_{i\in J}D_i\backslash\big(\bigcup_{i\not\in J}D_i\big)$ for any $J\subseteq I$. There is a natural partial order on the set of strata of $D$ given by $S\leq T$ if and only if $S\subseteq\overline T$. Let $\mathrm{Str}(B^0,H^0)$ denote the set of strata with vertical support, namely, those contained in $\widetilde B$. Then for any $S\in\mathrm{Str}(B^0,H^0)$, the closure $\overline S$ is smooth over $\widetilde K$, cf. \cite[\S 3.15]{GRW16}. In particular, each component of the reduction $\widetilde B_j$ is smooth over $\widetilde K$. Notice that our notation differs slightly from the one in {\em loc. cit.}, where the set of strata $\mathrm{Str}(B^0,H^0)$ is denoted by ${\rm str}(\widetilde B, H^0)$. 

We can now describe the {\em skeleton} of a strictly semistable pair $(B^0,H^0)$ following \cite[\S 4]{GRW16}. Notice, however, that in {\em loc. cit.} the construction is described for a formal strictly semistable pair obtained from $(B^0,H^0)$ by a formal completion with respect to a non-zero element of $K^{00}$. To a stratum $S\in \mathrm{Str}(B^0,H^0)$, one associates a polyhedron $\Delta_S$ as follows. Pick a sufficiently small open neighborhood $U$ of a general point of $S$ such that there exists an \'etale morphism $\varphi$ as in (\ref{eq:ssp}) and such that for any stratum $T$ the following holds: $S\le T$ if and only if $T\cap U\ne\emptyset$. Let $\{H_i\}_{1\leq i\leq b}$ be the horizontal divisors that intersect $U$, where $0\leq b\leq d-a$. We may assume that $H_i$ is defined by $\varphi^*(x_{a+i})$, and that $S$ is given by $\varphi^*(x_0)=\cdots=\varphi^*(x_{a+b})=0$. Set $\Delta_S:=\Delta(a,\lambda)\times \mathbb R^b_{\geq 0}$, where 
$$\Delta(a,\lambda)=\{y=(y_0,\dotsc,y_a)\in\mathbb R_{\geq 0}^{a+1}|y_0+\cdots +y_a=\nu(\lambda)\},$$
and notice that it is strictly convex, i.e., contains no lines. Following \cite{GRW16}, we call $\nu(\lambda)$ the {\em length} of $S$. 

For $U$ and $\Delta_S$ as above, consider the locus $U_S\subset U\setminus D$ of $K$-points $p$, whose specialization belongs to the union of strata $\bigcup_{S\le T} T$. There is a natural {\em tropicalization} map $\tau_U\colon U_S\rightarrow \Delta_S$ given by 
$$p\mapsto \left(\nu\left(\varphi^*(x_0)(p)\right),\dotsc,\nu\left(\varphi^*(x_{a+b})(p)\right)\right).$$ 
Then $\Delta_S$ is independent of the choice of $U$ and $\varphi$, up to reordering the coordinates, and the tropicalization map is also independent of the choice of $\varphi$, and compatible on the intersections for different choices of $U$; cf. \cite[\S 4.3]{GRW16}. 

If $S\le T$ are strata, then up to reordering of the coordinates, there exists $a'\leq a$ and $b'\leq b$, such that $T$ is given locally by $\varphi^*(x_0)=\cdots=\varphi^*(x_{a'})=\varphi^*(x_{a+1})=\cdots=\varphi^*(x_{a+b'})=0.$ Therefore, $\Delta_T$ can be identified naturally with a face of $\Delta_S$. Moreover, the tropicalization maps agree on $U_T$. Since $B^0$ is proper over $\Spec(K^0)$, any $K$-point specializes to one of the strata in $\mathrm{Str}(B^0,H^0)$, and therefore, the tropicalization maps glue to a well-defined tropicalization map $\tau$ from the set of $K$-points of $B^0\setminus D$ to the limit $S(B^0,H^0):=\bigcup_{S\in \mathrm{Str}(B^0,H^0)}\Delta_S$; cf. \cite[\S4.6, \S4.9]{GRW16}. In the sequel, e.g., in Theorem~\ref{thm:main thm}, we will abuse the notation and denote the tropicalization map $\tau$ by $\trop$.

\begin{defn}\label{defn:skeleton}(\cite[\S~4.6]{GRW16})
    The polyhedral complex  
    $S(B^0,H^0):=\bigcup_{S\in \mathrm{Str}(B^0,H^0)}\Delta_S$ is called the {\em skeleton} of the strictly semistable pair $(B^0,H^0)$.
\end{defn}

\begin{rem}
    The choice of the term ``skeleton'' is due to the fact that the polyhedral complex $S(B^0,H^0)$ embeds naturally into the Berkovich analytification $B^{\rm an}$ of $B$, and its image is a strong deformation retract of $B^{\rm an}$.
\end{rem}
The skeleton is naturally a rational polyhedral complex with an integral structure. Indeed, any face $\Delta_S$ admits a natural embedding $\Delta_S=\Delta(a,\lambda)\times \mathbb R^b_{\geq 0}\rightarrow \RR^{a+1+b}$, which induces the integral structure on $\Delta_S$. By construction, these integral structures are compatible for different faces of the skeleton.

\section{Main theorem}
In this section, we fix a pair of dual lattices $M$ and $N$, and a toric variety $X$ with lattice of monomials $M$.
Let $\overline{\CM}_{g,n}$ be the algebraic stack over $\mathbb Z$ classifying stable $n$-pointed curves of genus $g$.
Then $\overline{\CM}_{g,n}$ admits a finite surjective morphism from a projective scheme $\overline{M}$, over which there is a universal family of curves, cf. \cite[\S2.24]{dJ96}.

\begin{thm}\label{thm:main thm}
Let $(B^0,H^0)$ be a strictly semistable pair over $K^0$ and $(\cC^0\rightarrow B^0,\sigma^0_\bullet)$ a split family of stable marked curves over $B^0$. Let $B$, $H$ and $\cC$ be the generic fibers of $B^0$, $H^0$, and $\cC^0$, respectively. Suppose $\cC^0$ is obtained as a pullback of the universal family over $\overline M$ and is smooth over $B':=B\backslash H$. Suppose furthermore that $f \colon \cC|_{B'} \rightarrow X$ is a family of parameterized curves over $B'$. Then there exists a family of parameterized tropical curves $h\: \Gamma_\Lambda\to N_\RR$ over $\Lambda:=S(B^0,H^0)$ such that 
\begin{enumerate}
\item For any $\eta \in B'(K)$, the fiber of $(\Gamma_\Lambda, h)$ over $\trop(\eta)$ is the tropicalization of $f\:\cC_\eta\to X$;
\item For any face $W\in\CF(\Lambda)$ of codimension one (resp. of codimension at least two) such that the tropical curves over the interior of $W$ are weightless and $3$-valent except for at most one $4$-valent vertex, 
the induced map $\alpha\:\Lambda\to M^{\trop}_{g,n,\nabla}$ is either harmonic (resp. quasi-harmonic) or locally combinatorially surjective along $W$.
\end{enumerate}
\end{thm}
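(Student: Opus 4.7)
The plan has two parts: construct the family $(\Gamma_\Lambda, h)$ face by face on $\Lambda = S(B^0, H^0)$, and then verify the balancing dichotomy in part (2) by reducing to the one-dimensional case already handled in \cite{CHT20a}.

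\textbf{Construction of $(\Gamma_\Lambda, h)$.} For each stratum $S \in \mathrm{Str}(B^0, H^0)$, I take $\mathbb G_S$ to be the dual graph of the generic geometric fiber of $\cC^0$ over $S$ (which is well-defined because the family is split and prestable). Working in an \'etale chart $\varphi\: U \to \Spec(K^0[x_0,\dotsc,x_d]/(x_0\cdots x_a - \lambda))$ as in Definition~\ref{defn:strictly semistable pairs}, each node $z$ of the fiber of $\cC^0$ over a general point of $S\cap U$ has a local equation $uv = g_z$, where $g_z$ is a monomial in the coordinates $x_0,\dotsc,x_{a+b}$ (times a unit), the coordinates which cut out the closure of $S$ near $U$. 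The length of the associated edge $e$ over a point $q\in\Delta_S$ is then the integral affine function $\ell_S(e,q) := \nu(g_z(q))$, read off from the $\Delta_S$-coordinates introduced in Section~2.3. Similarly, for each vertex $v\in V(\mathbb G_S)$ and each $m \in M$, there is an element $\lambda_{v,m} \in K(B)^\times$, unique up to a unit along the component of $\widetilde{\cC^0}$ over $S$ corresponding to $v$, such that $\lambda_{v,m}\cdot f^*(x^m)$ is invertible at its generic point. Then $h_S(v,q)$, assigning to $m$ the appropriate valuation at $q$, defines an integral affine map $\Delta_S \to N_\RR$. Finally, for $S \leq T$, the \'etale chart for $T$ sits inside the one for $S$, yielding a weighted contraction $\phi_{T,S}\: \mathbb G_T \to \mathbb G_S$ and matching the local data, which proves the compatibilities of Definition~\ref{defn:family of tropical curves}. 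Part (1) then follows: for $\eta \in B'(K)$ with $\trop(\eta) \in \Delta_S^\circ$, evaluating $\ell_S$ and $h_S$ at $\trop(\eta)$ recovers the length function and parameterization of $\trop(\cC_\eta)$ described in Section~2.4, because both are built from the valuation of the very same local equations.

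\textbf{Balancing dichotomy.} Let $W\in\mathcal F(\Lambda)$ be a face whose tropical fibers are weightless and at worst almost $3$-valent, and let $\Theta$ be the combinatorial type over $W^\circ$. I verify the dichotomy one adjacent direction $\vec e \in \Star(W)$ at a time. For each such $\vec e$, I produce a smooth arc $B_1^0 \to B^0$ passing through a general point $b$ of the stratum $S$ of $W$ and transversal to $S$ in the tropical direction $\vec e$: in the \'etale chart around $b$, choose a smooth curve through $b$ whose tangent vector has valuation pattern matching $\vec e$, and replace the toric model by its pullback so as to preserve the strictly semistable structure. The pulled-back family $(\cC_1^0, \sigma_{1,\bullet}^0, f_1)$ is then a one-parameter family of parameterized curves over $B_1^0$, whose associated skeleton $S(B_1^0, H_1^0)$ is the ray $\RR_{\geq 0}\cdot\vec e$ emanating from $W^\circ$, and whose one-parameter tropicalization coincides, by the functoriality of \cite{gubler2016skeletons}, with the restriction of $(\Gamma_\Lambda, h)$ to that ray. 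The one-dimensional result \cite[Theorem~3.6, Proposition~3.8]{CHT20a} then applies to $B_1^0$ and gives, for each $\vec e$, either harmonicity (in which case $\partial\alpha/\partial \vec e$ contributes with coefficient $1$ to a vector in $\Lin(\alpha(W))$) or local combinatorial surjectivity along that direction. If local combinatorial surjectivity occurs for some $\vec e$, the dichotomy is immediately established. Otherwise, summing the harmonic contributions from all $\vec e \in \Star(W)$ yields $\sum a_{\vec e}\,\partial\alpha/\partial \vec e \in \Lin(\alpha(W))$ with positive integers $a_{\vec e}$; for codimension-one $W$ the contributing cone is one-dimensional so the coefficients can be normalized to $1$, yielding harmonicity, whereas in codimension $\geq 2$ the assertion is quasi-harmonicity.

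\textbf{Main obstacle.} The principal technical difficulty is establishing the existence of the transversal arc $B_1^0 \to B^0$ together with the compatibility of skeletons and tropicalizations of parameterized curves under the pullback. Concretely, one must choose the arc so that the pulled-back pair $(B_1^0, H_1^0)$ is again strictly semistable, so that its skeleton is exactly the ray in direction $\vec e$ (not a subdivision of it), and so that the pullback of $\cC^0$ remains a split prestable family whose dual graph in the central fiber equals $\mathbb G_W$. Once these three compatibilities are secured, the invocation of the one-dimensional dichotomy from \cite{CHT20a} is formal, and the combination of contributions over $\Star(W)$ gives the remaining (quasi-)harmonic statement.
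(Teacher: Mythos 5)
Your Step-1/Step-2 plan is essentially the paper's construction: for each stratum $S$ they take $\GG_S$ to be the dual graph of the central fiber over (a general point of) $S$, show via log-regularity of the strictly semistable model that each node's local smoothing parameter $g_z$ is, up to a unit along $S$, a monomial in $\pi$ and the coordinates $\varphi^*(x_i)$, and read the lengths $\ell_S(\gamma,\cdot)$ and positions $h_S(u,\cdot)$ off the exponents. So Part~(1) of your proposal is the same route as the paper.

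\textbf{Balancing dichotomy --- genuine gap.} Your Part~(2) is a genuinely different strategy, and it does not work as stated. Harmonicity is a condition on the \emph{simultaneous} collection of normal derivatives: you need
\[
\sum_{\vec e\in\Star(W)}a_{\vec e}\,\frac{\partial\alpha}{\partial\vec e}\;\in\;\Lin(\alpha(W)),
\]
and this cannot be checked ``one $\vec e$ at a time''. A single normal derivative $\partial\alpha/\partial\vec e$ is not, in general, in $\Lin(\alpha(W))$; only a suitable positive combination is. An arc through a general point $b\in S$ ``in the tropical direction $\vec e$'' would produce a one-dimensional skeleton whose vertex star near $\trop(b)$ does not isolate $\vec e$ and does not reproduce $\Star(W)$, so invoking the one-dimensional balancing result from \cite{CHT20a} along such an arc would not output the contribution ``$\partial\alpha/\partial\vec e$ with coefficient $1$''. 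Moreover the coefficients $a_{\vec e}$ needed for quasi-harmonicity are global intersection numbers, which your direction-by-direction argument has no way of producing. There is also a hypothesis problem: the cited one-dimensional statements in \cite{CHT20a} are for nice strata and simple walls, which carry an additional regularity assumption that need not hold here.

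\textbf{What the paper does instead.} The paper's Case~1/Case~2 dichotomy is governed by whether the classifying map $\chi\colon B^0\to\overline M$ contracts $\overline S$. If it does, they pick a \emph{complete} curve $C\subseteq\overline S$ (inside the stratum, not transversal to it) meeting each codimension-one stratum $\overline S_i\subset\overline S$ positively, set $a_i=\overline S_i\cdot C$, and check the balancing identity coordinatewise as an intersection-theoretic identity: both sides vanish because $\mathrm{div}(g_Z)\cdot C=0$ and $\widetilde B\cdot C=0$ for a complete $C$. When $\dim S=1$ one may take $C=\overline S$ and all $a_i=1$, giving harmonicity in codimension one. If $\chi$ does not contract $\overline S$, then $\GG_S$ must have a $4$-valent vertex $u$, and the induced nonconstant map $\overline S\to\overline M_{0,4}\cong\PP^1$ is surjective; this surjectivity gives local combinatorial surjectivity directly. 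The internal curve in $\overline S$ and the surjectivity of $\xi$ are the two key ideas missing from your reduction, and they cannot be recovered from one-dimensional transversal slices.
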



\begin{rem}\label{rem:ref}
    Property (1) can be considered as the ``defining property'' of the tropicalization of a family. Indeed, any natural tropicalization procedure should commute with point-wise tropicalization. In fact, in the case of families of curves, the existence of tropicalizations satisfying property (1) follows from the (more general) logarithmic tropicalization construction of Cavalieri, Chan, Ulirsch, and Wise, \cite[\S~7.1-7.3]{CCUW}. Constructions of tropicalizations of families of stable maps satisfying (1) can also be found in the literature but under the assumption that the characteristic of the ground field is zero, see, e.g., \cite{ACGS} or \cite{R17, Ran22}. 
    
    The main new ingredient in Theorem~\ref{thm:main thm} is the balancing properties of the induced map $\alpha$ described in (2). These balancing properties are crucial in some applications. For example, they can be used to prove realizability of parameterized curves in given families, and to prove irreducibility results for moduli of parameterized curves. In particular, the following corollary is used in the proof of irreducibility of Hurwitz schemes in any characteristic, \cite{CHT24b}.
\end{rem}

\begin{cor} \label{cor:main}
Let $X$ be a toric variety and $f\colon \cC \rightarrow X$ a family of parameterized curves defined over the field $K$. Assume that the base $B'$ of the family is quasi-projective, and consider the set of tropicalizations
$$\Sigma:=\left\{\left[ \trop(f_\eta, C_\eta)\right]\; :\; \eta\in B'(K) \right\}\subseteq M^{\trop}_{g,n,\nabla}.$$
Let $M_{[\Theta]}\subset M^{\trop}_{g,n,\nabla}$ be a stratum. Then,
\begin{enumerate}
\item The closure $\overline\Sigma\subseteq M^{\trop}_{g,n,\nabla}$ is the image of a rational polyhedral complex $\Lambda$ under a piecewise integral affine map;
\item If $M_{[\Theta]}$ is weightless and $3$-valent and $\dim(\Sigma\cap M_{[\Theta]})=\dim M_{[\Theta]}$, then $\overline M_{[\Theta]}\subseteq \overline{\Sigma}$;
\item If $M_{[\Theta]}$ is weightless and almost $3$-valent and $\overline{\Sigma}$ contains one of its adjoint weightless and $3$-valent strata, then $\overline{\Sigma}$ contains all other such strata as well. 
\end{enumerate}
\end{cor}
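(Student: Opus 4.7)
The plan is to combine Theorem~\ref{thm:main thm} with the (quasi-)harmonicity versus local combinatorial surjectivity dichotomy. Part~(1) identifies $\overline\Sigma$ with the image of a piecewise integral affine map $\alpha$ out of a polyhedral complex $\Lambda$. Part~(2) uses harmonicity to spread a full-dimensional image of $\alpha$ across codimension one walls inside a maximal stratum, and part~(3) uses local combinatorial surjectivity to propagate from one adjacent weightless $3$-valent stratum to the other such strata surrounding an almost $3$-valent stratum.

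\emph{Part (1).} By de Jong's alteration theorem and standard pullback arguments, after passing to a finite cover of $B'$ that leaves $\Sigma$ and $\overline\Sigma$ unchanged, we may realize $(B',\cC,f)$ as the restriction of a triple $(B^0,\cC^0,f)$ satisfying the hypotheses of Theorem~\ref{thm:main thm}. The theorem yields a family of parameterized tropical curves over $\Lambda:=S(B^0,H^0)$ and an induced piecewise integral affine moduli map $\alpha\colon\Lambda\to M^\trop_{g,n,\nabla}$. Since $\trop\colon B'(K)\to\Lambda$ has dense image with $\Sigma=\alpha(\trop(B'(K)))$, and $\alpha(\Lambda)=\bigcup_{W\in\CF(\Lambda)}\alpha(W)$ is a finite union of polyhedra (hence closed), continuity of $\alpha$ gives $\overline\Sigma=\alpha(\Lambda)$, proving (1).

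\emph{Part (2).} The hypothesis produces a face $W_0\in\CF(\Lambda)$ with $\alpha(W_0^\circ)\subseteq M_{[\Theta]}$ and $\dim\alpha(W_0)=\dim M_{[\Theta]}$. Set $S:=\alpha(\Lambda)\cap M_{[\Theta]}$. As $M_{[\Theta]}$ is connected, it suffices to verify that $S$ is relatively closed (clear) and relatively open in $M_{[\Theta]}$. A relative boundary point $p\in S$ in $M_{[\Theta]}$ must lie in the relative interior of $\alpha(W'')$ for some codimension one face $W''$ of a face $W\in\CF(\Lambda)$ with $\alpha(W^\circ)\subseteq M_{[\Theta]}$ full-dimensional; moreover $\alpha(W''^\circ)\subseteq M_{[\Theta]}$. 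Since $\Theta$ is weightless and $3$-valent, a direct inspection of weighted edge contractions (non-loop contractions raise valence, loop contractions create weight) shows that any face $W'\in\CF(\Lambda)$ containing $W''$ as codimension one face has the same type $\Theta$. Hence $\alpha(\Star(W''^\circ))\subseteq M_{[\Theta]}$, so by Definition~\ref{def:harloccombsurj}(1) and Theorem~\ref{thm:main thm}(2), $\alpha$ is (quasi-)harmonic at $W''$. Reducing the balancing identity $\sum a_{\vec e}\frac{\partial \alpha}{\partial\vec e}\in\Lin(\alpha(W''))$ modulo $\Lin(\alpha(W''))$, which is codimension one in $\Lin(M_{[\Theta]})$ locally at $p$, the nonzero positive contribution of the outward direction attached to $W$ is cancelled by another $\vec e'\in\Star(W'')$, and the corresponding face $W'$ satisfies $\dim\alpha(W')=\dim M_{[\Theta]}$ and sits on the opposite side of $\alpha(W'')$ from $\alpha(W)$. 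A neighborhood of $p$ in $M_{[\Theta]}$ is therefore contained in $S$, contradicting $p\in\partial S$. Hence $S=M_{[\Theta]}$ and $\overline M_{[\Theta]}\subseteq\overline\Sigma$.

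\emph{Part (3) and main obstacle.} The assumption produces a face $W_i\in\CF(\Lambda)$ with $\alpha(W_i^\circ)\subseteq M_{[\Theta_i]}$ and $\dim\alpha(W_i)=\dim M_{[\Theta_i]}$. The edge contraction $\phi_{W_i,W''}\colon\Theta_i\to\Theta$ resolving the $4$-valent vertex corresponds to a codimension one face $W''\subset W_i$ with $\alpha(W''^\circ)\subseteq M_{[\Theta]}$ and $\dim\alpha(W'')=\dim M_{[\Theta]}$. Since $\alpha(W_i^\circ)\not\subseteq M_{[\Theta]}$, Definition~\ref{def:harloccombsurj}(2) applies and Theorem~\ref{thm:main thm}(2) yields local combinatorial surjectivity at $W''$. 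Hence for each other adjacent weightless $3$-valent $\Theta_j$, there is a face $W_j\in\CF(\Lambda)$ with $W''\subset W_j$ and $\alpha(W_j^\circ)\subseteq M_{[\Theta_j]}$. The main obstacle is upgrading this set-theoretic statement to full dimension; here we use that the slope of the edge of $\Theta_j$ resolving the $4$-valent vertex of $\Theta$ is uniquely determined by tropical harmonicity and is transverse to $\Lin(\alpha(W''))$, which gives $\dim\alpha(W_j)=\dim\alpha(W'')+1=\dim M_{[\Theta_j]}$. Applying part~(2) to each such $\Theta_j$ completes the argument.
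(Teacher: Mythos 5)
Your overall strategy is the same as the paper's: invoke Theorem~\ref{thm:main thm} to produce $\alpha\colon\Lambda\to M^\trop_{g,n,\nabla}$ with $\overline\Sigma=\alpha(\Lambda)$, then use (quasi-)harmonicity to propagate full-dimensionality across $M_{[\Theta]}$ in part~(2), and local combinatorial surjectivity to jump between adjacent weightless $3$-valent strata in part~(3), concluding by an application of part~(2). Parts~(1) and (3) track the paper's argument closely. In (3), your ``transversality'' justification for $\dim\alpha(W_j)=\dim\alpha(W'')+1$ is a rephrasing of the paper's simpler observation that $\alpha(W'')\subsetneq\alpha(W_j)$ and $\alpha|_{W_j}$ is affine; both are correct.

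There is a genuine gap in part~(2). You assert that a relative boundary point $p$ of $S=\alpha(\Lambda)\cap M_{[\Theta]}$ ``must lie in the relative interior of $\alpha(W'')$ for some codimension one face $W''$ of a face $W$ with $\alpha(W^\circ)\subseteq M_{[\Theta]}$ full-dimensional; moreover $\alpha(W''^\circ)\subseteq M_{[\Theta]}$,'' and then apply harmonicity at $W''$. But this implicitly assumes that $W$ can be taken to have dimension $k:=\dim M_{[\Theta]}$ and hence $W''$ dimension $k-1$. If $\dim W > k$, the differential $d\alpha$ has nontrivial kernel along $W$, the polytope $\alpha(W)$ may equal $\alpha(\partial W)$, and the facet of $\alpha(W)$ through $p$ need not be the image of a single codimension-one face of $W$; moreover, $\Star(W'')$ in $\Lambda$ then contains many more directions than the two coming from opposite sides of a wall, and the balancing statement $\sum a_{\vec e}\,\partial\alpha/\partial\vec e\in\Lin(\alpha(W''))$ does not immediately produce a face of the required type on the opposite side. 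The paper resolves exactly this by replacing $W$ with a face of dimension $k-1$, and the replacement is justified by the fact that $\Lambda$, being the skeleton of a strictly semistable pair, contains no lines, so that $\alpha(W)=\alpha(\partial W)$ when $d\alpha$ is degenerate and one may induct downward. Your proof does not invoke this feature of $\Lambda$, and without it the reduction to a codimension-one face is not justified.
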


\begin{proof}
After replacing $B'$ with a finite covering, we may assume that the family of marked curves over $B'$ is a pullback of the universal family over the projective scheme $\overline M$. Plainly, it suffices to prove the corollary for each irreducible component of $B'$. Furthermore, we may replace $B'$ with $B'_{\rm red}$, and hence we will assume that $B'$ is integral, i.e., a quasi-projective variety. Thus, there is a projective variety $B$ containing $B'$ as a dense open subset. After replacing $B$ with the closure of the graph $B'\to \overline M \times B$, we may assume that the family of marked curves over $B'$ extends to $B$, and the latter family is also a pullback of the universal family over $\overline M$. Set $H:=B\setminus B'$. 

By \cite[\href{https://stacks.math.columbia.edu/tag/0C2S}{Tag~0C2S}]{stacks-project}, $B$ admits a flat projective model $B^0$ over $K^0$. Let $\overline M^0$ be the trivial model of $\overline M$ over $K^0$. After replacing $B^0$ with the schematic image of the graph $B\to \overline M^0 \times_{\Spec(K^0)} B^0$, which is also flat over $K^0$ by {\em loc. cit.}, we may assume that the family of marked curves extends to $B^0$. By \cite[Lemma~5.3]{dJ96}, there exist a projective alteration $\varphi_1 \colon B_1^0 \to B^0$ such that the pullback of $\cC$ to $B_1^0$ is split over $B_1^0$. Replacing $B_1^0$ with an irreducible component that dominates $B^0$ if necessary, we may assume that $B_1^0$ is integral. Since the loci $\Sigma$ associated to $B'$ and to $\varphi_1^{-1}(B')$ coincide, we may assume that $B_1^0=B^0$ and hence the family $\cC$ is split over $B^0$.
Finally, by \cite[Theorem~6.5]{dJ96}, there exists an alteration $\varphi_2 \colon B_2^0 \to B^0$ such that $B_2^0$ is projective over $K^0$, the generic fiber of $B_2^0\to\Spec(K^0)$ is irreducible, and the pair $\left(B_2^0, \overline{\varphi_2^{-1}(H)}_{\rm red}\right)$ is strictly semistable. Here $\overline{\varphi_2^{-1}(H)}_{\rm red}$ is the closure of $\varphi_2^{-1}(H)_{\rm red}$ in $B_2^0$.
As before, we may assume that $B_2^0=B^0$. 

Denote by $H^0$ the union of the horizontal components of $H\cup \tilde{B}$. Then $(B^0, H^0)$ is a strictly semistable pair, and the family of marked curves extends to $B^0$ by pulling back the universal family over $\overline M$. Therefore, we may assume that we are given all the data $B^0, H^0, \cC^0, \sigma^0_\bullet, B'$ satisfying the assumptions of Theorem~\ref{thm:main thm}. Consider the tropicalization $h\: \Gamma_\Lambda\to N_\RR$ and the induced map $\alpha\:\Lambda\to M^{\trop}_{g,n,\nabla}$ provided by Theorem~\ref{thm:main thm}.

(1) Since $\alpha$ is piecewise integral affine, it follows that $\alpha(\Lambda)\subseteq M^{\trop}_{g,n,\nabla}$ is closed. Now, by Theorem~\ref{thm:main thm} (1), $\Sigma\subset \alpha(\Lambda)$, and therefore $\overline\Sigma\subset \alpha(\Lambda)$. Furthermore, $\alpha^{-1}(\Sigma)$ is dense in $\Lambda$. Thus, $\Sigma$ is dense in $\alpha(\Lambda)$, and hence $\overline\Sigma=\alpha(\Lambda)$ as asserted.

(2) Set $k:=\dim M_{[\Theta]}$, and assume to the contrary that $\overline M_{[\Theta]}\nsubseteq \alpha(\Lambda)$. By (1), $\alpha(\Lambda)\subseteq M^{\trop}_{g,n,\nabla}$ is closed, and therefore $M_{[\Theta]}\nsubseteq \alpha(\Lambda)$. Furthermore, there is a face $W\in \CF(\Lambda)$ such that $\alpha(W^\circ)$ has dimension $k-1$, $\alpha\left(\Star(W^\circ)\right)$ has dimension $k$, and $\alpha(W^\circ)$ belongs to the relative boundary of $\alpha(\Lambda)$ in $M_{[\Theta]}$.

After replacing $W$ with one of its faces, we may assume that $\dim(W)=k-1$. Indeed, since $\dim(\alpha(W^\circ))=k-1$, the dimension of $W$ is at least $k-1$. If it is greater than $k-1$, then the kernel of the differential $d\alpha$ along $W$ is non-trivial. But $\Lambda$ is the skeleton of a strictly semistable pair, and therefore $W$ contains no lines. Thus, $\alpha(W)=\alpha(\partial W)$, and by induction, we can find a face of $W$ of dimension $k-1$ satisfying the requirements.

By construction, $\alpha\left(\Star(W^\circ)\right)$ belongs to a half-space supported by $\alpha(W^\circ)$ and the differential $d\alpha$ is not identically zero on $\Star(W^\circ)$. Therefore $\alpha$ is not quasi-harmonic along $W$ contradicting Theorem~\ref{thm:main thm} (2).

(3) Let $M_{[\Theta']}$ be a weightless and $3$-valent stratum adjacent to $M_{[\Theta]}$ that is contained in $\overline\Sigma$, and $M_{[\Theta'']}$ be another weightless and $3$-valent stratum adjacent to $M_{[\Theta]}$. Then there exists a face $W'\in\mathcal F(\Lambda)$ and a face $W$ of $W'$ of codimension one such that $\alpha\left((W')^\circ\right)\subseteq M_{[\Theta']}$ has dimension $\dim M_{[\Theta']}$, and $\alpha(W^\circ)\subseteq M_{[\Theta]}$. Thus, $\alpha$ is locally combinatorially surjective along $W$ by Theorem~\ref{thm:main thm} (3). In particular, there exists a face $W''\in\mathcal F(\Lambda)$ containing $W$ as a codimension-one face such that $\alpha\left((W'')^\circ\right)\subseteq M_{[\Theta'']}$, and hence 
$$\dim \alpha\left(W''\right) \ge \dim \alpha(W)+1=\dim\left(M_{[\Theta'']}\right)$$
since $\alpha|_{W''}$ is an affine map and $\alpha(W)\subsetneq\alpha(W'')$.
Therefore, $\overline M_{[\Theta'']}\subseteq \overline{\Sigma}$ by (2), and we are done.
\end{proof}

\begin{rem}
    The above results, while applicable to the more general case of weightless and (almost) $3$-valent strata, are conceived with a view towards their application in \cite{CHT24b}. There we will use simple walls and nice strata of \cite{CHT23}, which are weightless and (almost) $3$-valent strata, which in addition are regular, i.e., of the expected dimension. Two weightless and (almost) $3$-valent, but not regular, strata were studied already in \cite{CHT22}:

    A non-regular, weightless and almost $3$-valent type $\Theta$ is given by a contracted loop adjacent to the $4$-valent vertex, and no other contracted edges. In this case, the stratum $M_{[\Theta]}$ has dimension one more than expected. While Corollary~\ref{cor:main} (2) does not apply in this case, since the stratum is not $3$-valent, the claim still holds: since $\alpha$ is not locally combinatorially surjective at $M_{[\Theta]}$ by the proof of \cite[Lemma~4.6]{CHT22}, it is (quasi-)harmonic by Theorem~\ref{thm:main thm}. This in turn is the only statement we used to deduce Corollary~\ref{cor:main} (2).  

    A non-regular, weightless and $3$-valent type $\Theta$ on the other hand appears when $\Theta$ is $3$-valent but contains a single flattened cycle as in \cite[Definition~3.7]{CHT22}. In this case, we again have dimension one more than expected, and $M_{[\Theta]}$ contains curves that are not realizable \cite[Proposition~3.8]{CHT22}. In particular, the condition of local surjectivity of $\alpha$ in Corollary~\ref{cor:main} (2) can never be satisfied. However, one can describe the realizable sublocus in $M_{[\Theta]}$ explicitly, and (quasi-)harmonicity of $\alpha$
    again ensures that if $\alpha$ is locally surjective at a point in this locus, it is surjective onto the whole realizable locus in $M_{[\Theta]}$. 
\end{rem}

\medskip

The rest of the section is devoted to the proof of Theorem~\ref{thm:main thm}. Since $K$ is the algebraic closure of a complete discretely valued field $F$, any $K$-scheme of finite type is defined over a finite extension $F'$ of $F$, which is also a complete discretely valued field since so is $F$. Thus, we may view a $K$-scheme of finite type as the base change of a scheme over a finite extension of $F$. In the proofs below, we will work over $F'$ in order to have a well-behaved total space of the families we consider. To ease the notation, we will assume that all points we are interested in are defined already over $F$ and integral models -- over $F^0$. For the rest of the proof, we fix a uniformizer $\pi\in F^{00}$, and assume without loss of generality that $\nu(\pi)=1$. In particular, we may assume that $\eta$ and $f$ are defined over $F$ and $(\cC^0\rightarrow B^0,\sigma^0_\bullet)$ over $F^0$. 

Throughout the proof, we will use the following notation: $\widetilde B$ and $\widetilde \cC$ will denote the special fibers, 
$s:=\red(\eta)\in \widetilde B$ the reduction of $\eta$, and $\widetilde\cC_s$ the corresponding fiber of $\cC^0\to B^0$. We set $D_{B^0}:=\widetilde B \cup H$ and $D_{\cC^0}:=\widetilde \cC \cup  \cC_H \cup \left( \bigcup_j \sigma_j^0 \right)$, where $\cC_H$ is the restriction of $\cC$ to $H$. Then, the pullback of any monomial function $f^*(x^m)$ is regular and invertible on $\cC^0 \setminus D_{\cC^0}$ by the assumptions of the theorem. Recall that $D_{B^0}$ has a stratification, and we denote by $\mathrm{Str}(B^0,H^0)$ the set of strata with vertical support. 

Let $S\in \mathrm{Str}(B^0,H^0)$ be the stratum containing $s$, and $s\in U\subset B^0$ an open neighborhood admitting an \'etale morphism
$\varphi\colon U\rightarrow \mathrm{Spec}\left(F^0[x_0,\dotsc,x_d]/\langle x_0\cdots x_a-\lambda\rangle\right)$ as in Definition~\ref{defn:strictly semistable pairs}, such that $S\cap U$ is given by
$\varphi^*(x_0)=\cdots=\varphi^*(x_{a+b})=0$ for some $0\le b\leq d-a$. Then the tropicalization $$\trop(\eta)=\left(\nu\left(\varphi^*(x_0)(\eta)\right),\dotsc,\nu\left(\varphi^*(x_{a+b})(\eta)\right)\right)$$
is contained in the interior of $\Delta_{S}=\Delta(a,\lambda)\times \mathbb R^b_{\geq 0}$. Without loss of generality we may assume that $\lambda=\pi^l$, where $l=\nu(\lambda)$ is the length of $S$.
We denote by $\widetilde\cC_S$ and $\widetilde\cC_{\overline S}$ the restriction of $\widetilde \cC$ to $S$ and $\overline S$, respectively. Similarly, for an irreducible component $D_i\subseteq D_{B^0}$, given locally by $\varphi^*(x_i)=0$ for some $0\leq i\leq a+b$, we denote by $\cC_{D_i}$ the restriction of $\cC^0$ to  $D_i$. Finally, we denote by $\psi$ the map from the preimage of $U$ in $\cC^0$ to $\mathrm{Spec}\left(F^0[x_0,\dotsc,x_d]/\langle x_0\cdots x_a-\pi^l\rangle\right)$.

\begin{rem}
    As already mentioned in Remark~\ref{rem:ref}, the existence of tropicalizations of families of (parameterized) curves satisfying property (1) is known to the experts, see e.g., \cite{ACGS,CCUW,R17,Ran22}. However, the results in {\em loc. cit.} are stated in a different language and some are proven under the assumption that the characteristic is zero. Therefore, we include a proof of the existence of tropicalizations satisfying (1) for the convenience of the reader.
\end{rem}

\subsection{Proof of part (1) Theorem~\ref{thm:main thm}}
We need to specify a datum (\dag) as in Section~\ref{subsubsec:family of parameterized tropical curves} that satisfies the compatibility conditions of Definition~\ref{defn:family of tropical curves}. Since the tropicalizations of $K$-points give rise only to rational points of $\Lambda$, we will work with those and extend the family by linearity to the non-rational points in the very end. To ease the notation, for a face $W=\Delta_T\in \CF(\Lambda)$ corresponding to a stratum $T \in \mathrm{Str}(B^0, H^0)$, set $\Theta_T:=\Theta_W$, $\GG_T:=\GG_W$, $\ell_T:=\ell_W$, and $h_T:=h_W$. Similarly, if $W'=\Delta_{T'}\in \CF(\Lambda)$ is such that $W$ is a face of $W'$, i.e., $T'\le T$, then set $\phi_{T',T}:=\phi_{W',W}$.

\subsubsection*{Step 1: The tropicalization $\trop(\cC_\eta)$ depends only on $\trop(\eta)\in \Lambda$, the underlying graph of $\trop(\cC_\eta)$ depends only on $S$, and the length of each edge of $\trop(\cC_\eta)$ is an integral affine function over $\Delta_S^\circ$ which extends to $\Delta_S$ in a compatible way.}
  
By definition, the underlying graph of $\trop(\cC_\eta)$ is the dual graph $\mathbb G_s$ of $\widetilde\cC_s$. 
We first show that $\mathbb G_s$ only depends on the stratum $S$. Let $z$ be a node of $\widetilde \cC_s$. Since $\cC^0$ is pulled back from $\overline M$, there exist an \'etale neighborhood $V$ of $s$ in $B^0$ and a function $g_z\in \mathcal O_V(V)$ vanishing at $s$ such that the family $\cC^0\times_{B^0}V$ is given by $xy=g_z$ \'etale locally near $z$. After shrinking $V$, we may assume that the latter is true in a neighborhood of any node of $\widetilde\cC_s$, and that the morphism $V\to B^0$ factors through $U\subseteq B^0$. By assumption, $\cC$ is smooth over $B\backslash H$, hence each $g_z$ is invertible on the complement of $D_{B^0}$. However, $V$ is normal, and $g_z(s)=0$ for all nodes $z$, hence each $g_z$ must vanish along some component of $D_{B^0}\cap V$ that intersects $S$, and therefore contains $V\cap S$. Thus, all the $g_z$'s vanish identically along $S$. This implies that the dual graph $\mathbb G_s$ is \'etale locally constant along $S$. Moreover, since $\widetilde \cC_S\rightarrow S$ is split, the identification of $\mathbb G_s$ with the dual graph of the fiber over the generic point of $S$ is canonical. Therefore, $\mathbb G_s$ only depends on $S$. Set $\GG_S:=\GG_s$. Then for any $S\le T\in \mathrm{Str}(B^0,H^0)$ there is a natural contraction $\phi_{S,T}\colon \GG_{S}\rightarrow \GG_{T}$ obtained by the specialization of the curve over the generic point of $T$ to the curve over the generic point of $S$.

Pick a node $z\in \widetilde{\cC}_s$. We claim that there exist integers $n_i$ and $n_\pi$ such that 
\begin{equation}\label{eq:monomialfunction}
g_z\cdot\pi^{-n_\pi}\cdot\prod_{i=0}^{a+b} \varphi^*(x_i)^{-n_i}
\end{equation}
is regular and invertible along $S$. Here, by abuse of notation, we denote by $\varphi^*(x_i)$ the pullback of the monomial $x_i$ to $V$ rather than $U$. Consider the log-structure on $\Spec(F^0)$ given by the uniformizer $\pi$, the {\em monomial} log-structure on $\mathrm{Spec}(F^0[x_0,\dotsc,x_d]/\langle x_0\cdot\ldots\cdot x_a-\pi^l\rangle)$ given by $\pi$ and the monomials $x_i$'s, and the monomial log-structure on $V$ given by the pullback of the monomial log-structure on $\mathrm{Spec}(F^0[x_0,\dotsc,x_d]/\langle x_0\cdot\ldots\cdot x_a-\pi^l\rangle)$. Then $$\mathrm{Spec}(F^0[x_0,\dotsc,x_d]/\langle x_0\cdot\ldots\cdot x_a-\pi^l\rangle)\to \Spec(F^0)$$ is log-smooth by \cite[\S 8.1]{Kato94} and $V\to \mathrm{Spec}(F^0[x_0,\dotsc,x_d]/\langle x_0\cdot\ldots\cdot x_a-\pi^l\rangle)$ log-\'etale by \cite[Proposition~3.8]{Kato89}. Since $\Spec(F^0)$ is log-regular, so is $V$ by \cite[\S 8.2]{Kato94}. Now, since $V$ is log-regular, the monomial log-structure is given by the pushforward of the sheaf of invertible functions on the complement of $D_{B^0}$ by \cite[Theorem~11.6]{Kato94}. Finally, since $g_z$ is invertible away from $D_{B^0}$, it follows that the function $g_z$ is a monomial in $\pi$ and $\varphi^*(x_i)$'s up-to an invertible function as asserted. Proofs of similar claims in slightly different settings can be found in \cite[Lemma~4.8.2]{Wlo22} and \cite[Proposition~2.11]{Gubler07}.

Next, we compute the lengths of the edges of $\GG_s$ in $\trop(\cC_\eta)$. By \eqref{eq:monomialfunction}, the length of $\gamma\in E(\mathbb G_s)$ corresponding to $z$ is given by 
$$
\ell(\gamma)=\nu(g_z(\eta))=n_\pi+\sum_{i=0}^{a+b}n_i\cdot \nu\left(\varphi^*(x_i)(\eta)\right)=\sum_{i=0}^{a+b}k_i\cdot \nu\left(\varphi^*(x_i)(\eta)\right),
$$
where $k_i:=n_i+\frac{1}{l}n_\pi=\frac{1}{l}\mathrm{ord}_{D_i}(g_z)$ if $i\le a$ and $k_i:=n_i=\mathrm{ord}_{D_i}(g_z)$ otherwise. We claim that the $k_i$'s are independent of the choice of $s\in S$. Indeed, for another $s'\in S$, the corresponding $g_z$ and $g'_z$ differ by an invertible function on the intersection of the corresponding \'etale neighborhoods of $s$ and $s'$, and therefore also at the generic points of the $D_i$'s for all $0\leq i\leq a+b$. Thus, $k_i=\mathrm{ord}_{D_i}(g_z)=\mathrm{ord}_{D_i}(g'_z)=k'_i$ as claimed. Furthermore, we see that the length function $\ell(\gamma)$ depends only on the tropicalization $q:=\trop(\eta)=\left(\nu\left(\varphi^*(x_0)(\eta)\right),\dotsc, \nu\left(\varphi^*(x_{a+b})(\eta)\right)\right)$ rather than the point $\eta$ itself.
 
Set $y_i:=\nu\left(\varphi^*(x_i)(\eta)\right)$. Then, $q=(y_0,\dotsc,y_{a+b})$ and the length function
$$\ell_S(\gamma,q)=n_\pi+\sum_{i=0}^{a+b} n_iy_i=\sum_{i=0}^{a+b} k_iy_i$$ 
is an integral affine function on $\Delta_{S}^\circ$ that extends naturally to $\Delta_S$. It remains to show that $\ell_S(\gamma,q)$ satisfies the compatibility condition of Definition~\ref{defn:family of tropical curves} (2). Note that $g_z$ is defined over an \'etale open subset of $B^0$ which intersects all strata $T\in\mathrm{Str}(B^0,H^0)$ satisfying $S\le T$. Let $T$ be the stratum given locally by 
$$\varphi^*(x_0)=\cdots =\varphi^*(x_{a'})=0=\varphi^*(x_{a+1})=\cdots=\varphi^*(x_{a+b'}),$$
for some $a'\leq a$ and $b'\leq b$. Then $\Delta_{T}$ is the face of $\Delta_S$ given by $$y_{a'+1}=\cdots=y_a=y_{a+b'+1}=\cdots=y_{a+b}=0.$$

If the node $z$ is smoothed out in the family of curves over $T$, then $g_z$ is invertible at the generic point of $D_i$ with $0\leq i\leq a'$ or $a+1\leq i\leq a+b'$, and the edge $\gamma$ gets contracted to a vertex in $\GG_{T}$. Hence $k_i=0$ for $0\leq i\leq a'$ or $a+1\leq i\leq a+b'$ and the compatibility holds:
$$\ell_{T}(\phi_{S,T}(\gamma),q)=0=\ell_S(\gamma,q),\ \mathrm{for\ all}\ q\in \Delta_{T}.$$

Assume now that $z$ is the specialization of a node of the curve over the generic point of $T$. Then $\gamma$ is not contracted by $\phi_{S,T}$. In this case, again for any $s'\in T$, the corresponding $g'_z$ and $g_z$ differ by a function invertible at the generic points of all $D_i$ that contain $T$, namely, with $0\leq i\leq a'$ or $a+1\leq i\leq a+b'$. Therefore, $\mathrm{ord}_{D_i}(g_z)=\mathrm{ord}_{D_i}(g'_z)$ for those $D_i$'s, and hence the restriction of $\ell_S(\gamma,\cdot)$ to $\Delta_T$ coincides with $\ell_T(\phi_{S,T}(\gamma),\cdot)$ as needed.

\subsubsection*{Step 2: The parameterization $h_\eta\colon\trop(\cC_\eta)\rightarrow N_\RR$ depends only on $\trop(\eta)$, and varies integral affine linearly on $\Delta_S^\circ$ with constant combinatorial type. It also extends to $\Delta_S$ in a compatible way.}\ 

Since each section $\sigma^0_\bullet$ of the family $\cC\rightarrow B^0$ is mapped to a unique toric divisor or the dense orbit in $X$, the slopes of the legs of $\trop(\cC_\eta)$ are independent of $\eta$ by Remark~\ref{rem:slopesoflegs}. Therefore, the extended degree of $h_\eta\colon\trop(\cC_\eta)\rightarrow N_\RR$ is independent of $\eta$ and will be denoted by $\overline\nabla$.

Recall that the closure $\overline S$ of $S$ is a component of the intersection of $D_0,\dotsc,D_{a+b}$.
Let $u$ be a vertex of $\GG_{S}$. 
Denote by
$\widetilde \cC_u$ the component of $\widetilde \cC_S$ corresponding to $u$ and by $\widetilde\cC_{s,u}$ the component of $\widetilde \cC_s$ corresponding to $u$.
As in Step 1, let $q=(y_0,\dotsc,y_{a+b}):=\trop(\eta)$.
Let $\widetilde \cC_u'\subset\widetilde \cC_u$ be the open subset of non-special points. Then in a neighborhood of $\widetilde \cC_u'$, the toroidal structure on $\mathscr C^0$ is pulled back from the one on $B^0$.
Therefore, as in Step 1, for any $m\in M$, there exist integers $n_i$ for $0\leq i\leq a+b$ and $n_\pi$ such that  $$f^*(x^m)\cdot\pi^{-n_\pi}\cdot\prod_{0\leq i\leq a+b} \psi^*(x_i)^{-n_i}$$ is regular and invertible 
along $\widetilde \cC_u'$. 
Hence, by definition, the inner product of $h_\eta(u)\in N_\RR$ with $m$ is given by $$-\mathrm{ord}_{\widetilde \cC_{s,u}}\left(f^*(x^m)|_{\cC_{\overline{\eta}}}\right)=-n_\pi-\sum_{i=0}^{a+b}n_i\cdot \nu\left(\varphi^*(x_i)(\eta)\right)=-n_\pi-\sum_{i=0}^{a+b}n_iy_i,$$ which only depends on $q=\trop(\eta).$
Let us write $h_S(u,q)=h_\eta(u)$, then $h_S(u,\cdot)$ is an integral affine function on $\Delta_S$.
By a similar argument as in Step 1, we see that this function satisfies the compatibility of Definition~\ref{defn:family of tropical curves} (3).

As the parameterization $h_\eta$ varies continuously with respect to $q=\trop(\eta)$, while the set of combinatorial types with underlying graph $\GG_S$ is a discrete set, we see that the combinatorial type of $h_\eta$ is constant, which we denote by $\Theta_S$.

\subsubsection*{Step 3: The conclusion of the proof of (1).}
In the first two steps, we have constructed the data of a family of parameterized tropical curve over $\Lambda$ - the extended degree, the combinatorial type for each face of $\Lambda$, the weighted contractions, and the integral affine functions $\ell_S$ and $h_S$. Furthermore, we have verified the compatibility conditions of Definition~\ref{defn:family of tropical curves}, and have seen that the resulting family of parameterized tropical curves over $\Lambda$ satisfies assertion (1).

\subsection{Proof of Part (2) of Theorem~\ref{thm:main thm}} 
Denote by $\chi\colon B^0\rightarrow \overline M$ the map inducing the family $\cC^0$. Let $S\in \mathrm{Str}(B^0,H^0)$ be the stratum corresponding to $W$. Then the closure $\overline S$ of $S$
is a proper and smooth variety. Denote by $\Theta$ the combinatorial type of $\alpha(W^\circ)$. There are two cases to consider:

Case 1: \textit{$\chi$ contracts $\overline S$}. In this case we will prove that $\alpha$ is harmonic along $W$ if $W$ has codimension one and quasi-harmonic along $W$ otherwise.  Set $p:=\chi(\overline S)$ and let $C_p$ be the fiber of the universal family over $p$. Since $\cC^0$ is a pullback of the universal family along $\chi$, it follows that $\widetilde \cC_{\overline S}=\overline S\times C_p$. Furthermore, $\GG_S$ is the dual graph of $C_p$, and $\alpha$ maps $\Star(W^\circ)$ to $M_{[\Theta]}$, which lifts to a map to $M_\Theta\subset \RR^{|E(\GG_S)|} \times N_\RR^{|V(\GG_S)|}$. By abuse of notation, the lifting is also denoted by $\alpha$. 

Recall that each point $s\in S$ admits an open neighborhood $s\in U\subset B^0$ and an \'etale morphism
$\varphi\colon U\rightarrow \mathrm{Spec}\left(F^0[x_0,\dotsc,x_d]/\langle x_0\cdots x_a-\pi^l\rangle\right)$ such that $S\cap U$ is given by $\varphi^*(x_0)=\cdots=\varphi^*(x_{a+b})=0$ for some $0\le b\leq d-a$, and hence
$W=\Delta(a,\pi^l)\times \mathbb R^b_{\geq 0}$. Let $v_0,\dotsc,v_a$ be the vertices of $W$. For $a+1\leq i\leq a+b$, let $\vec e_i$ be the standard basis of $\mathbb R^b$ and set $v_i=v_0+\vec e_i$. Then $\Lin(\alpha(W))$ is generated by $\alpha(v_i)-\alpha(v_0)$ for $1\leq i\leq a+b$.

Let $\{S_i\}_{i=a+b+1}^{a+b+r+t}$ be the set of strata of codimension one in $\overline S$. Each $S_i$ corresponds to a face $W_i\in\mathcal F(\Lambda)$ containing $W$ as a codimension-one face. Suppose $\overline S_i=\overline S\cap D_i$ for some component $D_i$ of $D_{B^0}$. We may further assume that $D_i$ is vertical for $a+b+1\leq i\leq a+b+r$, and horizontal for $a+b+r+1\leq i\leq a+b+r+t$. Note that the $D_i$'s are uniquely determined by the $S_i$'s but may not necessarily be distinct. In the sequel, we will mainly consider the following components of $D_{B^0}$, which have non-empty intersection with $\overline S$: the components $D_0,\dotsc,D_{a+b}$, which are distinct and contain $\overline S$; and the components $D_{a+b+1},\dotsc,D_{a+b+r+t}$ constructed above.

For $a+b+1\leq i\leq a+b+r$,
let $l_i$ be the length of $S_i$ and  $v'_i$ be the vertex of $W_i$ not contained in $W$. In fact, $l_i=l$ unless $a=0$. Let $\vec e_i=\frac{1}{l_i}(v'_i-v_0)$ be the primitive integral vector parallel to $v'_i-v_0$, and set $v_i:=v_0+\vec e_i$. For $a+b+r+1\leq i\leq a+b+r+t$ we have $W_i=W\times \RR_{\geq 0}$. In this case, set $v_i:=v_0+\vec e_i$, where $\vec e_i$ is the unit normal vector to $W$ in $W_i$. Then $W_i$ is contained in $W+\RR_{\geq 0}\vec e_i$ for $a+b+1\leq i\leq a+b+r+t$. Consequently, we have $\Star(W)=\{\vec e_i+N_W\}_{a+b+1\leq i\leq a+b+r+t}$ and $\frac{\partial \alpha}{\partial\vec e_i}=\alpha(v_i)-\alpha(v_0)$. Thus, to prove quasi-harmonicity (resp. harmonicity), it remains to show that
\begin{equation}\label{eq:balancing in proof}
   \sum_{i=a+b+1}^{a+b+r+t}a_i\left(\alpha(v_i)-\alpha(v_0)\right)\in\Lin(\alpha(W))
\end{equation}
for some positive integers $a_i$ (resp. $a_i=1$).

Since $\overline S$ is \'etale locally isomorphic to an affine space and the $\overline S_i$'s correspond to the coordinate hyperplanes, we can pick a (possibly reducible) curve $C$ in $\overline S$ that has positive intersection numbers with each $\overline S_i$ and none of whose components is contained in $\bigcup \overline{S}_i$.  In particular, when $\dim S=1$ (equivalently, $W$ has codimension one in $\Lambda$), we set $C:=\overline S$. Set $l_i:=l$ for $i\le a$, and $l_i:=1$ for $a+1\le i\le a+b$ and for $i>a+b+r$. Then (\ref{eq:balancing in proof}) would follow from the following equality
\begin{equation}\label{eq:balance claim}
\sum_{i=a+b+1}^{a+b+r+t}\overline S_i\cdot C \ \big(\alpha(v_i)-\alpha(v_0)\big)=-\sum_{i=0}^{a+b}l_iD_i\cdot C\  \big(\alpha(v_i)-\alpha(v_0)\big),
\end{equation} 
where the intersection product on the left is in $\overline S$, while the one on the right is in $B^0$. Note that when $\dim S=1$ we have $\overline S_i\cdot C=1$ by construction.

We now prove \eqref{eq:balance claim} coordinatewise. Let $\gamma\in E(\GG_{S})$ be an edge corresponding to a node $z\in C_p$.   
Let $u,u'\in V(\GG_S)$ be the vertices adjacent to $\gamma$, let $C_u$ and $C_{u'}$ be the corresponding components of $C_p$, and $\widetilde{\cC}_u, \widetilde{\cC}_{u'}, Z$ the pullbacks of $C_u, C_{u'}, z$ to $\widetilde{\cC}_{\overline S}$, respectively. The universal curve over $\overline M$ is given \'etale locally at $z$ by $xy=m_z$, where $m_z$ is defined on an \'etale neighborhood of $p$, and vanishes at $p$. Thus, $\cC^0$ is given by $xy=g_Z:=\chi^*(m_z)$ in an \'etale neighborhood of $Z$. Notice that we constructed a function defined in a neighborhood of the whole family of nodes $Z$, which globalizes the local construction of Step 1 in the particular case we consider here.

According to Step 1, the $\gamma$-coordinate of $\alpha(v_i)-\alpha(v_0)$ is equal to $\frac{1}{l_i}\left(\ord_{D_i}(g_Z)-\ord_{D_0}(g_Z)\right)$ when $D_i$ is vertical, namely $0\leq i\leq a$ or $a+b+1\leq i\leq a+b+r$; and equal to $\ord_{D_i}(g_Z)$ otherwise. Set $k_i:=\frac{1}{l_i}\ord_{D_i}(g_Z)$.

Then the $\gamma$-coordinate of the left-hand side of \eqref{eq:balance claim} is given by 
$$\sum_{i=a+b+1}^{a+b+r+t}k_i\overline S_i\cdot C-\sum_{i=a+b+1}^{a+b+r}k_0\frac{l}{l_i}\overline S_i\cdot C;$$
while on the right-hand side it is given by 
$$-\sum_{i=0}^{a+b} l_ik_iD_i\cdot C+\sum_{i=0}^al_0k_0 D_i\cdot C=-\sum_{i=0}^{a+b} l_ik_iD_i\cdot C+\sum_{i=0}^alk_0 D_i\cdot C.$$
Therefore, \eqref{eq:balance claim} is equivalent to
\begin{equation}\label{eq:0=0}
    \sum_{i=0}^{a+b} l_ik_iD_i\cdot C+\sum_{i=a+b+1}^{a+b+r+t}k_i \overline S_i\cdot C=lk_0\Big(\sum_{i=0}^aD_i\cdot C+\sum_{i=a+b+1}^{a+b+r}\frac{1}{l_i}\overline S_i\cdot C\Big),
\end{equation}
which holds true since both sides vanish. Indeed, since $\overline S_i\cdot C=l_iD_i\cdot C$ for all $i>a+b$, the left-hand side of \eqref{eq:0=0} is $\mathrm{div}(g_Z)\cdot C$, which vanishes since $C$ is complete, while the right-hand side is $k_0l\widetilde B\cdot C=0$ since $\widetilde B$ is a principal divisor of $B^0$ defined by $\pi$.

Next we verify \eqref{eq:balance claim} for the $u$-coordinates. Pick $u\in V(\GG_{S})$. For $1\leq i\leq a+b+r+t$, let $\cC_i$ be the irreducible component of $\cC_{D_i}$ that contains some fiber of $\widetilde\cC_u$. Denote $k'_i=\frac{1}{l_i}\ord_{\cC_i}f^*(x^m)$. 
In this case, by taking inner product of both sides of \eqref{eq:balance claim} with an arbitrary $m\in M$, a similar computation as above shows that \eqref{eq:balance claim} is equivalent to 
$$\sum_{i=0}^{a+b} l_ik'_iD_i\cdot C+\sum_{i=a+b+1}^{a+b+r+t}k'_i \overline S_i\cdot C=lk'_0\Big(\sum_{i=0}^aD_i\cdot C+\sum_{i=a+b+1}^{a+b+r}\frac{1}{l_i}\overline S_i\cdot C\Big).$$
Take a section $\widetilde \sigma_S$ of $\widetilde \cC_u\rightarrow \overline S$
that is disjoint with any section $\sigma^0_\bullet$ of $\cC^0$ and the singular locus of $\cC^0$, and let $\widetilde \sigma_C$ (resp. $\widetilde \sigma_i$) be the restriction of $\widetilde \sigma_S$ on $C$ (resp. $\overline S_i$). 
By projection formula, we have $D_i\cdot C=\cC_i\cdot \widetilde \sigma_C$. Then \eqref{eq:balance claim} is equivalent to
$$\sum_{i=0}^{a+b} l_ik'_i\mathscr C_i\cdot \widetilde \sigma_C+\sum_{i=a+b+1}^{a+b+r+t}k'_i \widetilde \sigma_i\cdot \widetilde \sigma_C=lk'_0\Big(\sum_{i=0}^aD_i\cdot C+\sum_{i=a+b+1}^{a+b+r}\frac{1}{l_i}\overline S_i\cdot C\Big),$$
and again, the latter equality holds true since both sides vanish. Indeed, the left-hand side is now the intersection $\widetilde\sigma_C\cdot\mathrm{div} f^*(x^m)$ and the right-hand side is $lk'_0\widetilde B\cdot C$. This concludes the proof of \eqref{eq:balance claim} and of Case 1.

Case 2: {\em $\chi$ does not contract $\overline S$.} By the assumptions of the theorem, the graph $\GG_{S}$ is weightless and $3$-valent except for at most one $4$-valent vertex. Since rational curves with three marked points have no moduli, it follows that $\GG_{S}$ has a $4$-valent vertex, which we denote by $u\in\GG_{S}$. In this case we will show that the map $\alpha$ is locally combinatorially surjective at $W$. Consider $\widetilde \cC_u$ as above. By construction, $\widetilde \cC_u\to \overline S$ is a family of rational curves and since $\widetilde \cC \to \widetilde B$ is split, the four marked points in each fiber define sections of the family. Let $\xi\:\overline S \to \overline M_{0, 4}\simeq \PP^1$ be the induced map. Since $\chi$ does not contract $\overline S$, it follows that  $\xi$ is not constant, and hence surjective. Thus, the preimage of each boundary point of $\overline M_{0,4}$ in $\overline S$ is the union of closures of some codimension-one strata $S'$. Such strata correspond to faces $W'\in\mathcal F(\Lambda)$ containing $W$ as a codimension-one face. Since $\xi$ is surjective, it follows that for each weightless and $3$-valent types $\Theta'$ corresponding to one of the three possible resolutions of the $4$-valent vertex $u$, there exists a face $W'$ as above for which $\alpha(W')\cap M_{[\Theta']}\neq \emptyset$. Therefore $\alpha$ is locally combinatorially surjective along $W$ as claimed.
\qed

\bibliographystyle{amsalpha}
\bibliography{1}
\end{document}